\begin{document}

\newcommand{\bea}{\begin{eqnarray}}
\newcommand{\ena}{\end{eqnarray}}
\newcommand{\beas}{\begin{eqnarray*}}
\newcommand{\enas}{\end{eqnarray*}}
\newcommand{\beq}{\begin{equation}}
\newcommand{\enq}{\end{equation}}
\def\qed{\hfill \mbox{\rule{0.5em}{0.5em}}}
\newcommand{\bbox}{\hfill $\Box$}
\newcommand{\ignore}[1]{}
\newcommand{\ignorex}[1]{#1}
\newcommand{\wtilde}[1]{\widetilde{#1}}
\newcommand{\mq}[1]{\mbox{#1}\quad}
\newcommand{\bs}[1]{\boldsymbol{#1}}
\newcommand{\qmq}[1]{\quad\mbox{#1}\quad}
\newcommand{\qm}[1]{\quad\mbox{#1}}
\newcommand{\nn}{\nonumber}
\newcommand{\Bvert}{\left\vert\vphantom{\frac{1}{1}}\right.}
\newcommand{\To}{\rightarrow}
\newcommand{\supp}{\mbox{supp}}
\newcommand{\law}{{\cal L}}
\newcommand{\Z}{\mathbb{Z}}
\newcommand{\mc}{\mathcal}
\newcommand{\mbf}{\mathbf}
\newcommand{\tbf}{\textbf}
\newcommand{\lp}{\left(}
\newcommand{\limm}{\lim\limits}
\newcommand{\limminf}{\liminf\limits}
\newcommand{\limmsup}{\limsup\limits}
\newcommand{\rp}{\right)}
\newcommand{\mbb}{\mathbb}
\newcommand{\rainf}{\rightarrow \infty}
\newtheorem{problem}{Problem}[section]
\newtheorem{exercise}{Exercise}[section]
\newtheorem{theorem}{Theorem}[section]
\newtheorem{corollary}{Corollary}[section]
\newtheorem{conjecture}{Conjecture}[section]
\newtheorem{proposition}{Proposition}[section]
\newtheorem{lemma}{Lemma}[section]
\newtheorem{definition}{Definition}[section]
\newtheorem{example}{Example}[section]
\newtheorem{remark}{Remark}[section]
\newtheorem{solution}{Solution}[section]
\newtheorem{case}{Case}[section]
\newtheorem{condition}{Condition}[section]
\newtheorem{defn}{Definition}[section]
\newtheorem{eg}{Example}[section]
\newtheorem{thm}{Theorem}[section]
\newtheorem{lem}{Lemma}[section]
\newtheorem{soln}{Solution}[section]
\newtheorem{propn}{Proposition}[section]
\newtheorem{ex}{Exercise}[section]
\newtheorem{conj}{Conjecture}[section]
\newtheorem{pb}{Problem}[section]
\newtheorem{cor}{Corollary}[section]
\newtheorem{rmk}{Remark}[section]
\newtheorem{note}{Note}[section]
\newtheorem{notes}{Notes}[section]
\newtheorem{readingex}{Reading exercise}[section]
\newcommand{\pf}{\noindent {\bf Proof:} }
\newcommand{\proof}{\noindent {\it Proof:} }
\frenchspacing

\tikzstyle{level 1}=[level distance=2.75cm, sibling distance=5.65cm]
\tikzstyle{level 2}=[level distance=3cm, sibling distance=2.75cm]
\tikzstyle{level 3}=[level distance=3.9cm, sibling distance=1.5cm]

\tikzstyle{bag} = [text width=10em, text centered] 
\tikzstyle{end} = [circle, minimum width=3pt,fill, inner sep=0pt]

\title{\bf Linear Time Recognition of Equimatchable Split Graphs}
\author{Mehmet Akif Y{\i}ld{\i}z\footnote{Department of Mathematics, Bo\u{g}azi\c{c}i University, 34342, Bebek, Istanbul, Turkey} }



\maketitle
 
\begin{abstract}
\noindent A maximal matching $M$ that consists of independent edges is a subgraph of a simple and undirected graph $G$ for which $G-M$ forms an independent set. A graph $G$ is called equimatchable if all maximal matchings have the same number of edges. On the other hand, $G$ is called as a split graph if its vertices can be partitioned into two subsets for which one of them forms a clique whereas the second forms an independent set. We will give a linear time algorithm for recognition of equimatchable split graphs.
\bigskip

\textbf{Keywords:} equimatchable; matching; split; recognition; linear time
\end{abstract}

\section{Introduction}

Let $G=(V(G),E(G))$ be simple and undirected graph. An induced subgraph $H\subseteq G$ is a graph on a vertex set $V(H)\subseteq V(G)$ such that $uv\in E(H)\Leftrightarrow uv\in E(G)$ for all $u,v\in V(H)$. For an induced subgraph $H$ of $G$, we use the notation $G-H$ to mean the induced subgraph on the vertex set $V(G)-V(H)$. For a vertex $v\in V(G)$, we denote by $N(v)$ the set of \textit{neighbors} of $v$, that is, vertices adjacent to $v$. Also we denote the intersection $N(v)\cap V(H)$ by $N_H(v)$. The \textit{degree} of $v$ is defined as the cardinality of $N(v)$, i.e. $d(v)=|N(v)|$. A vertex $v$ is said to be \textit{isolated} if $d(v)=0$. A set of vertices is called as a \textit{clique} if all vertices in it are pairwise adjacent whereas it is called as an \textit{independent set} if those are pairwise non-adjacent.\\

\noindent A \textit{matching} $M$ is defined as a collection of some edges from $G$ for which any two edges in $M$ has no common end vertices. $M$ is called \textit{maximum} if its cardinality is greater than or equal to all possible matchings of $G$ whereas $M$ is called \textit{maximal} if $G-M$ produces an independent set. From the definition, every maximum matching is also maximal. In the literature \cite{maximum-matching}, there is a well-known polynomial time algorithm to find the cardinality of the maximum matching. In other words, maximum size of maximal matchings can be found in polynomial time. However, in \cite{difficultinbipartite}, authors showed that calculating the minimum cardinality on maximal matchings is NP-hard even for $3$-regular bipartite graphs as an extension of Yannanakis and Gavril's work \cite{yannakakis}.\\

\noindent Examining some specific cases leading to a polynomial solution can be seen as a first step when the general case is NP-hard. Especially, constraints providing an exact solution with a greedy approach are considerable because their execution would be pretty easy. From this perspective, graphs that give a polynomial algorithm for minimum maximal matching have been concerned and the emergence of \textit{equimatchable} graphs was originated from this idea. A graph $G$ is called equimatchable if all maximal matchings have same cardinality. These graphs are first considered in 1974, simultaneously in \cite{equimatchable1974-1}, \cite{equimatchable1974-2}, \cite{equimatchable1974-3}, and formally introduced by Lesk, Plummer and Pulleyblank in 1983 \cite{equimatchablefirst}. Since a maximal matching can be obtained greedily, the matching number becomes equivalent to the size of minimum maximal matching and so the problem is polynomial in those graphs.\\

\noindent In the literature, there are numerous works to recognize whether a given graph belongs to a specific class such as \cite{recognitionofsplit} and \cite{recognitioncograph}. Especially, algorithms having linear running time are quite important when their executions are concerned. Due to study of Demange and Ekim \cite{recognitionofequimatchable}, equimatchable graphs can be recognized in $O(n^2m)$ time where $n$ and $m$ represents to number of vertices and edges in the graph. Although there is no hope in general case, graph classes which lead to obtain a linear time recognition algorithm can be questioned. In this paper, we will examine a graph class for which recognition of equimatchable graphs is linear. A graph $G$ is called \textit{split} if its vertices can be partitioned into two sets where one of them is a clique and the second forms an independent set. Split graphs were first studied by F\"{o}ldes and Hammer in \cite{split1}, \cite{split2}, and independently introduced by Tyshkevich and Chernyak in \cite{split3}. Certifying of split graphs take linear time from \cite{recognitionofsplit}. Thus, equimatchable split graphs can be seen as an ideal candidate to search for a linear time recognition algorithm.

\section{Properties of Equimatchable Split Graphs}
Since isolated vertices are trivially not included in any matching, we will assume there are no isolated vertices in all studied graphs from now on. Throughout the section, let $G=(V,E)$ be a split graph with split partition $(K,I)$ where $K$ is a clique and $I$ is an independent set. If there exists $k\in K$ such that $N_I(k)=\emptyset$, then $K^{'}=K\backslash\{k\}$, $I^{'}=I\cup\{k\}$ will give another split partition for $G$. On the other hand, each $i\in I$ has at least one neighbor in $K$ because there is no isolated vertex in $G$. By this convention, we will have $N_I(k)\neq\emptyset$ for all $k\in K$ and $N(i)\neq\emptyset$ for all $i\in I$. Note that if $S$ is a subgraph of $G$ with $|S|>|I|$, then $S$ has to have either at least two elements from $K$ or exactly one element from $K$ with all elements in $I$. Thus, $S$ has at least one edge, which implies $I$ becomes a maximum independent set.

\begin{lemma}\label{lem:trivialcases}
	If $|K|=1$ or $|I|=1$, then $G$ is equimatchable.
\end{lemma}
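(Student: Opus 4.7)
The plan is to split into the two cases and handle each by a short direct argument, relying on the convention established just above the lemma that every $k\in K$ has a neighbor in $I$ (and vice versa).

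For the case $|K|=1$, write $K=\{k\}$. By the convention, every $i\in I$ is adjacent to $k$, so $G$ is simply a star with center $k$ and leaf set $I$. Since any two edges of $G$ share the vertex $k$, every matching has at most one edge; since $|I|\ge 1$ (otherwise $k$ would be isolated), any empty matching fails to leave an independent complement. Hence every maximal matching has exactly one edge, and $G$ is equimatchable.

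For the case $|I|=1$, write $I=\{i\}$ and $n=|K|$. The convention forces $i$ to be adjacent to every vertex of $K$, so $N(i)=K$. I will classify maximal matchings $M$ by whether $i$ is saturated. If $i$ is unsaturated, the maximality (independence of unmatched vertices) together with $N(i)=K$ forces every vertex of $K$ to be saturated by $M$; since $K$ is a clique and $M$ restricted to $K$ is a matching, this requires $n$ to be even, and $|M|=n/2$. If instead $i$ is matched, say via $ik$, then the remaining unmatched vertices lie inside the clique $K\setminus\{k\}$ and must form an independent set, hence there is at most one such vertex. A parity count on $|K\setminus\{k\}|=n-1$ then gives: if $n$ is odd the rest of $K$ is perfectly matched, yielding $|M|=(n+1)/2$; if $n$ is even exactly one vertex of $K\setminus\{k\}$ is unmatched and $|M|=1+(n-2)/2=n/2$.

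Combining the two subcases, when $n$ is even both possibilities yield $|M|=n/2$, and when $n$ is odd only the second subcase occurs (the first requires $n$ even), yielding $|M|=(n+1)/2$. Either way all maximal matchings have the same size, so $G$ is equimatchable. There is no real obstacle here; the only subtlety is remembering to invoke the standing convention so that $N(i)=K$ in the $|I|=1$ case, which is precisely what makes the parity argument collapse to a single value.
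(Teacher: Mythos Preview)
Your proof is correct and follows essentially the same approach as the paper. The only difference is cosmetic: in the $|I|=1$ case the paper simply observes that $G$ is a complete graph (since $K$ is a clique and, by the convention $N_I(k)\neq\emptyset$ with $I=\{i\}$, every $k\in K$ is adjacent to $i$) and then notes that any maximal matching in $K_{|V|}$ misses at most one vertex, whereas you carry out the explicit case split on whether $i$ is saturated; both arrive at the same $\lfloor |V|/2\rfloor$ count.
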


\begin{proof}
	Note that $|K|=1$ implies there exists a vertex $v\in V$ such that $N(v)=V-\{v\}$ and $N(u)=\{v\}$ for all $u\in V-\{v\}$. Thus, all maximal matchings contains a single edge. On the other hand, $|I|=1$ implies $uv\in E$ for all $u,v\in V$ and so all maximal matchings have either all vertices of $G$, or miss only one vertex from $G$ depending only on the cardinality of $|V|$. As a result, $|K|=1$ or $|I|=1$ implies $G$ is equimatchable. \qed 
\end{proof}

\begin{lemma}\label{lem:independent-two}
	If $|I|=2$ and $|K|$ is odd, then $G$ is equimatchable.
\end{lemma}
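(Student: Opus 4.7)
The plan is to show that every maximal matching $M$ of $G$ has cardinality $(|K|+1)/2$; equimatchability follows immediately. By Lemma~\ref{lem:trivialcases} I may assume $|K|\geq 3$, and I would write $I=\{i_1,i_2\}$ and split the analysis into three cases according to how many of $i_1,i_2$ are saturated by $M$.

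If neither $i_j$ is saturated then $M$ lies entirely in $E(K)$, so the $M$-unsaturated set has the form $\{i_1,i_2\}\cup U$ for some $U\subseteq K$. Maximality forces this set to be independent; since $K$ is a clique we get $|U|\leq 1$, and any $u\in U$ would have $N_I(u)=\emptyset$, violating the standing convention from the start of the section. So $U=\emptyset$, but a perfect matching of the odd-order clique $K$ does not exist. Hence this case is impossible.

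If exactly one of $i_1,i_2$ is saturated, say $i_1k_1\in M$ with $k_1\in K$, then $M$ restricted to $K\setminus\{k_1\}$ is a matching in a clique of even order. Any two unsaturated vertices of $K\setminus\{k_1\}$ would form an edge that could be added to $M$, so the number of unsaturated vertices inside $K\setminus\{k_1\}$ is at most $1$; the even parity of $|K|-1$ then forces it to be $0$, giving $|M|=1+(|K|-1)/2=(|K|+1)/2$. If both $i_j$'s are saturated, via distinct $k_1,k_2\in K$, an identical count on the odd-order clique $K\setminus\{k_1,k_2\}$ forces exactly one vertex of it to be unsaturated (a single vertex is trivially independent, so no conflict with maximality arises), yielding $|M|=2+(|K|-3)/2=(|K|+1)/2$.

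The main point to get right is the first case: one has to combine the standing assumption $N_I(k)\neq\emptyset$ with the clique structure and the odd parity of $|K|$ to rule it out altogether. Once that is done, the parity/clique arguments in the remaining two cases are routine and both produce the same value $(|K|+1)/2$, so all maximal matchings agree in size and $G$ is equimatchable.
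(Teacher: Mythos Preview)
Your proof is correct, but it proceeds quite differently from the paper's. The paper gives a short counting argument: writing $|K|=2p-1$, the graph has $2p+1$ vertices, so any matching has at most $p$ edges; conversely, if a maximal matching $M$ had at most $p-1$ edges then $V\setminus V(M)$ would be an independent set of size at least $3$, contradicting the fact (established just before the lemma, using the same convention $N_I(k)\neq\emptyset$) that $I$ is a \emph{maximum} independent set. Your approach instead splits explicitly on how many of $i_1,i_2$ are saturated, invoking the convention $N_I(k)\neq\emptyset$ directly to kill Case~0 and then using parity-plus-clique counts in Cases~1 and~2. The paper's argument is shorter and more conceptual --- it isolates the single structural fact (no independent set of size $3$) that does all the work --- whereas your case analysis is more hands-on and has the minor side benefit of describing exactly how the edges of any maximal matching are distributed between $E(K)$ and the $K$--$I$ cut.
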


\begin{proof}
	If $|K|=1$, the claim follows from Lemma \ref{lem:trivialcases}. Let $|K|=2p-1$ with $p\geq2$. Note that $G$ has $2p+1$ vertices and we will show that every maximal matching in $G$ has exactly $p$ edges. Assume the contrary, say there exists a maximal matching $M$ contains at most $p-1$ edges. Note that there can be at most $2p-2$ vertices in $M$, but $|V- V(M)|\geq3$ gives there exists an independent set of size at least $3$. However, this is impossible since $I$ is a maximum independent set and it contains only two elements. \qed
\end{proof}

\begin{lemma}\label{lem:cliquepartisodd}
If $G$ is equimatchable with $|I|,|K|\geq 2$, then $|K|$ is odd. 
\end{lemma}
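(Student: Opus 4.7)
The plan is to prove the contrapositive: if $|K|=2p$ is even and $|I|,|K|\geq 2$, then $G$ admits two maximal matchings of different sizes. The first one is immediate: since $K$ is a clique of even size, take any perfect matching of $K$ to get a matching of size $p$. The vertices left unmatched form exactly $I$, which is independent, so this matching is maximal.

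For the second matching, the goal is to produce a maximal matching of size $p+1$. The idea is to exhibit two ``cross'' edges $i_1k_1$ and $i_2k_2$ with $i_1,i_2\in I$ distinct and $k_1,k_2\in K$ distinct, and then extend by any perfect matching of the $2p-2$ remaining (and still completely adjacent) vertices of $K$. The total size becomes $2+(p-1)=p+1$, and the matching is maximal because all of $K$ is saturated and the unmatched vertices lie in $I$.

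The crux is proving the existence of the two disjoint cross edges. I would argue by cases using the standing assumptions $N_I(k)\neq\emptyset$ for all $k\in K$ and $N(i)\neq\emptyset$ for all $i\in I$. Pick any two distinct $k_1,k_2\in K$ with neighbors $i_1\in N_I(k_1)$ and $i_2\in N_I(k_2)$. If $i_1\neq i_2$, we are done. Otherwise $i_1=i_2=:i$; if $|N_I(k_1)|\geq 2$ or $|N_I(k_2)|\geq 2$, replace the duplicated endpoint with another neighbor and we are done. The only remaining case is $N_I(k_1)=N_I(k_2)=\{i\}$. Here invoke $|I|\geq 2$ to pick $j\in I\setminus\{i\}$, and let $k\in N(j)\subseteq K$. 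Since $N_I(k_1)=N_I(k_2)=\{i\}$ and $j\neq i$, we must have $k\notin\{k_1,k_2\}$, so $|K|\geq 3$ and the edges $ik_1,\,jk$ are disjoint as required.

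The main (minor) obstacle is the boundary case $|K|=2$: there the fallback ``$k\notin\{k_1,k_2\}$'' is impossible, so I would handle it separately by observing that in that regime the fallback case is vacuous (it would force $j=i$, contradicting the choice of $j$), and therefore the two disjoint cross edges already exist among the first attempts, giving a maximal matching of size $2$ against the perfect-matching-of-$K$ matching of size $1$. Apart from this bookkeeping, the rest is routine verification that both constructed matchings are indeed maximal, which follows immediately from the fact that $I$ is always the set of unmatched vertices (a subset of it, really) and $I$ is independent.
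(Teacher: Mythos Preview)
Your proof is correct and follows essentially the same approach as the paper: exhibit a maximal matching of size $p$ (a perfect matching of $K$) and one of size $p+1$ (two disjoint cross edges together with a perfect matching of the remaining $2p-2$ clique vertices). The only difference is cosmetic---the paper finds the two disjoint cross edges by fixing one edge $ua$ and arguing that either some edge survives between $K-u$ and $I-a$, or else the bipartite graph between $K$ and $I$ is complete---whereas you run a direct case analysis on $N_I(k_1),N_I(k_2)$; both routes arrive at the same pair of matchings. (Your separate handling of $|K|=2$ is in fact already covered by your general argument: deriving $k\notin\{k_1,k_2\}$ in the fallback case simply \emph{is} the contradiction when $K=\{k_1,k_2\}$, so that case never arises.)
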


\begin{proof}
Assume the contrary, suppose $G$ is equimatchable, $|I|,|K|\geq 2$ and $|K|=2p$ for some natural number $p$. Since $I$ is an independent set, $K$ gives a maximal matching consisting of $p$ edges. Take $u\in K$ and choose an element $a\in I$ such that $ua\in E$ by using $N_I(u)\neq\emptyset$. Suppose there is an edge between $K-u$ and $I-a$, i.e. take $vb\in E$ for some $v\in K-u$, $b\in I-a$. Since $K- \{u,v\}$ is a clique of size $2p-2$, it has a matching consisting of $p-1$ edges and we can add $ua$ and $vb$ into this matching, which yields a matching consisting of $p+1$ edges as a contradiction to being $G$ equimatchable. Hence, by assuming there are no edges between $K-u$ and $I-a$, we get $bu\in E$ for all $b\in I-a$ since $N(b)\neq\emptyset$. Therefore, we have $N_I(u)=I$. Since $u$ was arbitrary, it follows that $ki\in E$ for all $k\in K$ and $i\in I$. Thus, take $k_1,k_2\in K$ and $i_1,i_2\in I$ by using $|I|,|K|\geq 2$. Observe that $K-\{k_1,k_2\}$ is a clique of size $2p-2$ and so it has a matching consisting of $p-1$ edges. Since we can add $k_1i_1$ and $k_2i_2$ into this matching, we get a contradiction again. As a result, $|K|$ must be odd if $|I|,|K|\geq 2$ and $G$ is equimatchable. \qed
\end{proof}

\begin{lemma}\label{lem:threeindependent}
If $G$ is equimatchable, then there are no six different vertices which satisfy $k_1,k_2,k_3\in K$, $i_1,i_2,i_3\in I$ with $k_1i_1,k_2i_2,k_3i_3\in E$. 
\end{lemma}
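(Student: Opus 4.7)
The plan is to derive a contradiction by exhibiting two maximal matchings of different sizes. Assume for contradiction that six distinct vertices $k_1,k_2,k_3\in K$ and $i_1,i_2,i_3\in I$ with $k_1i_1,k_2i_2,k_3i_3\in E$ exist. Since $|K|\geq 3$ and $|I|\geq 3$, Lemma \ref{lem:cliquepartisodd} applies and $|K|$ is odd, so write $|K|=2p+1$ for some $p\geq 1$.

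Next I would build two maximal matchings of different cardinality. For the smaller one, let $M_1=\{k_1i_1\}\cup P_1$, where $P_1$ is a perfect matching of the clique $K\setminus\{k_1\}$ on $2p$ vertices (which exists because $K$ is a clique of even order after removing $k_1$); this contributes $p$ edges, so $|M_1|=p+1$. For the larger one, let $M_2=\{k_1i_1,k_2i_2,k_3i_3\}\cup P_2$, where $P_2$ is a perfect matching of the clique $K\setminus\{k_1,k_2,k_3\}$ on $2p-2$ vertices, contributing $p-1$ edges, so $|M_2|=p+2$.

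To see that both are maximal, observe that the unsaturated vertex set of $M_1$ is contained in $I\setminus\{i_1\}$ and that of $M_2$ is contained in $I\setminus\{i_1,i_2,i_3\}$; in each case this is a subset of the independent set $I$, hence an independent set, so no further edge can be added. Since $|M_1|\neq |M_2|$, this contradicts the assumption that $G$ is equimatchable, completing the proof.

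The argument is essentially pure construction; I do not anticipate a serious obstacle. The only point that needs to be handled with mild care is the boundary case $p=1$ (that is, $|K|=3$), where $P_2$ is the empty matching on an empty vertex set and one should just observe that the sizes still differ ($|M_1|=2$ versus $|M_2|=3$). Once Lemma \ref{lem:cliquepartisodd} is invoked to guarantee the parity of $|K|$, the two matchings above drop straight out.
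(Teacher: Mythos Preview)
Your proof is correct and follows essentially the same approach as the paper's: invoke Lemma~\ref{lem:cliquepartisodd} to get $|K|$ odd, then exhibit two maximal matchings of different sizes by matching one versus three of the $k_j$'s into $I$ and pairing the remaining clique vertices among themselves. The only differences are cosmetic (you parametrize $|K|=2p+1$ where the paper writes $|K|=2p-1$, and you spell out the maximality argument and the boundary case more explicitly than the paper does).
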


\begin{proof}
Assume the contrary, let $k_1,k_2,k_3\in K$, $i_1,i_2,i_3\in I$ with $k_1i_1,k_2i_2,k_3i_3\in E$. Note that $|K|$ is odd from Lemma \ref{lem:cliquepartisodd}, say $|K|=2p-1$ for some natural number $p$. Since $K$ has three different vertices, we get $p\geq 2$. Since $K-\{k_1\}$ is a clique of size $2p-2$, it has $p-1$ independent edges. Then, we can build a maximal matching that consist of exactly $p$ edges by adding $k_1i_1$ into these $p-1$ edges. On the other hand, we can build a maximal matching that consist of $p+1$ edges by taking the edges $k_1i_1$, $k_2i_2$, $k_3i_3$ with $p-2$ independent edges from the clique $K-\{k_1,k_2,k_3\}$ of size $2p-4$, which is a contradiction. \qed
\end{proof}

\begin{lemma}\label{lem:complementing}
Assume $G$ is equimatchable with $|I|\geq3$, $|K|\geq 2$. If $vi\notin E$ for some $v\in K$ and $i\in I$, then either $N(j)=K-N(i)$ or $N(j)=K$ for all $j\in N_I(v)$.
\end{lemma}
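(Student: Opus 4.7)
I will argue by contradiction, assuming there exists $j\in N_I(v)$ with $N(j)\notin\{K,\,K-N(i)\}$. The main tool is Lemma~\ref{lem:threeindependent}, which forbids three mutually independent edges between $K$ and $I$ having three distinct $K$-endpoints and three distinct $I$-endpoints.

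The natural first split is by whether $N(j)\cap N(i)=\emptyset$. Suppose first it is. Then $N(j)\subsetneq K-N(i)$, so there exists $k\in (K-N(i))\setminus N(j)$, and $k\neq v$ since $v\in N(j)$. Picking any $k_0\in N(i)$ and any $i''\in N_I(k)$ (both nonempty by the standing conventions), the prescribed adjacencies immediately force $v,k_0,k$ to be pairwise distinct in $K$ and $j,i,i''$ to be pairwise distinct in $I$; hence $\{vj,\,k_0 i,\,k i''\}$ is a forbidden independent triple, contradicting Lemma~\ref{lem:threeindependent}.

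The substantive case is when some $k\in N(j)\cap N(i)$ exists while $N(j)\neq K$. Then $vj$ and $ki$ are two independent edges, and if additionally any edge $k'i'$ with $k'\in K-\{v,k\}$ and $i'\in I-\{i,j\}$ exists we are done at once. Otherwise I read off the structural constraint $N_I(k')\subseteq\{i,j\}$ for every $k'\in K-\{v,k\}$, equivalently $N(i')\subseteq\{v,k\}$ for every $i'\in I-\{i,j\}$. Using $N(j)\neq K$, I pick $\ell\in K\setminus N(j)$; since $v,k\in N(j)$ one has $\ell\in K-\{v,k\}$, and then $\ell\notin N(j)$ together with the constraint and $N_I(\ell)\neq\emptyset$ forces $N_I(\ell)=\{i\}$, so $\ell\in N(i)$. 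Finally, using $|I|\geq 3$, I pick $i'''\in I-\{i,j\}$; the constraint gives $N(i''')\subseteq\{v,k\}$ nonempty, so $v\in N(i''')$ or $k\in N(i''')$. A third independent edge now materialises in either option: take $\{v i''',\,k j,\,\ell i\}$ if $v\in N(i''')$, or $\{v j,\,k i''',\,\ell i\}$ if $k\in N(i''')$. In both triples the three $K$-endpoints $v,k,\ell$ and the three $I$-endpoints are visibly distinct, contradicting Lemma~\ref{lem:threeindependent}.

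I expect the substantive case to be the main obstacle: one must first rule out the easy production of a third edge (which yields the structural constraint above) and then revive a third edge through the carefully chosen $\ell$ and some $i'''\in I-\{i,j\}$, making essential use of both the hypothesis $|I|\geq 3$ and the fact that $N(j)$ is strictly smaller than $K$.
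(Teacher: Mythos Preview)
Your proof is correct and follows essentially the same strategy as the paper's: both arguments repeatedly manufacture three pairwise independent $K$--$I$ edges and invoke Lemma~\ref{lem:threeindependent}. The only organizational difference is that the paper first proves $N(i)\cup N(j)=K$ and then $N(i)\cap N(j)=\emptyset$, whereas you split directly on whether $N(i)\cap N(j)$ is empty; your choice in Case~2 of the base vertex $k\in N(i)\cap N(j)$ (rather than an arbitrary $u\in N(i)$ as in the paper) neatly sidesteps the paper's $w=u$ versus $w\neq u$ subcase.
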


\begin{proof}
Let us take $v\in K$ and $i\in I$ with $vi\notin E$. Let $j\in N_I(v)$ and assume that $N(j)\neq K$. Since $N(i)\neq\emptyset$, there exists $u\in K$ with $ui\in E$. If there exists an edge $wk$ between $K-\{u,v\}$ and $I- \{i,j\}$ for some $w\in K-\{u,v\}$ and $k\in I- \{i,j\}$, then the edges $ui$, $vj$, $wk$ contradict with Lemma \ref{lem:threeindependent}. It follows $N_I(w)\subseteq \{i,j\}$ for all $w\in K- \{u,v\}$ and $N(k)\subseteq \{u,v\}$ for all $k\in I- \{i,j\}$. Thus, we get $N(i)\cup N(j)=K$ by noting $v\in N(j)$ and $u\in N(i)$.\\

\noindent We will also prove that $N(i)\cap N(j)=\emptyset$. Assume the contrary, take $w\in N(i)\cap N(j)$. Firstly, observe that $w\neq v$ since $vi\notin E$, and let us first consider the case $w\neq u$. Take an arbitrary element $k\in I-\{i,j\}$. If $k$ is adjacent to $u$ (resp. $v$), the edges $wi$, $ku$, $vj$ (resp. $wi$, $kv$, $uj$) contradict with Lemma \ref{lem:threeindependent}. On the other hand, we have $N(i)- N(j)\neq\emptyset$ since $N(j)\neq K$. If $w=u$, then we get $u,v\notin N(i)- N(j)$ and so we can find $t\in K-\{u,v\}$ such that $ti\in K$. Thus, take an arbitrary element $k\in I-\{i,j\}$. Similarly, if $k$ is adjacent to $u$ (resp. $v$), the edges $ti$, $ku$, $vj$ (resp. $ti$, $kv$, $uj$) contradict with Lemma \ref{lem:threeindependent}. \qed \\
  
\end{proof}

\begin{lemma}\label{lem:generalcase}
If $G$ is equimatchable with $|I|\geq3$, $|K|\geq 2$, then there exist $x\in K$ and $y\in I$ such that $N(z)=\{x\}$ for all $z\in I-\{y\}$, and either $N(y)=K-\{x\}$ or $N(y)=K$. 
\end{lemma}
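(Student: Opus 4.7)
The plan is to translate Lemma~\ref{lem:threeindependent} into a matching-number bound on the bipartite graph between $K$ and $I$, then apply K\"onig's theorem to extract a tiny vertex cover which immediately delivers the claimed star structure.

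First, Lemma~\ref{lem:cliquepartisodd} together with $|K|\geq 2$ forces $|K|\geq 3$. Let $H$ denote the bipartite graph on parts $K,I$ whose edges are the edges of $G$ joining a vertex of $K$ to a vertex of $I$. Lemma~\ref{lem:threeindependent} is exactly the statement that $H$ contains no matching of size $3$, so by K\"onig's theorem there exists a vertex cover $C\subseteq K\cup I$ of $H$ with $|C|\leq 2$.

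The central step is to pin down $C$ as $\{x,y\}$ with $x\in K$ and $y\in I$. The standing conventions $N_I(k)\neq\emptyset$ for every $k\in K$ and $N(z)\neq\emptyset$ for every $z\in I$ make every other possibility impossible: if $C\subseteq K$, then since $|K|\geq 3\geq |C|+1$ some $k\in K-C$ would have $N_I(k)=\emptyset$, and symmetrically $C\subseteq I$ is ruled out by $|I|\geq 3$. Hence $C=\{x,y\}$ with $x\in K$, $y\in I$, and every edge of $H$ is incident to $x$ or $y$.

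From here the conclusion is forced. For each $z\in I-\{y\}$, any neighbor of $z$ in $K$ lies in $C\cap K=\{x\}$, so $N(z)\neq\emptyset$ yields $N(z)=\{x\}$. For each $k\in K-\{x\}$, any neighbor of $k$ in $I$ lies in $\{y\}$, and $N_I(k)\neq\emptyset$ forces $ky\in E$; thus $K-\{x\}\subseteq N(y)\subseteq K$, so $N(y)$ equals $K-\{x\}$ or $K$. The main obstacle is really only the reframing: once Lemma~\ref{lem:threeindependent} is read as a bound on $\nu(H)$, K\"onig's theorem makes the rest routine, and the no-isolated-vertex conventions eliminate all unwanted vertex cover patterns without any need to construct auxiliary maximal matchings.
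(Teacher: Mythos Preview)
Your proof is correct and considerably cleaner than the paper's. The paper argues directly: it fixes $y\in I$ of maximum degree, splits into the cases $N(y)=K$ and $N(y)\neq K$, and in each case repeatedly invokes Lemma~\ref{lem:threeindependent} (and the auxiliary Lemma~\ref{lem:complementing}) to rule out stray edges, producing a somewhat intricate case analysis. Your reformulation---reading Lemma~\ref{lem:threeindependent} as $\nu(H)\leq 2$ for the bipartite crossing graph $H$ and then applying K\"onig's theorem---collapses all of that: the vertex cover $\{x,y\}$ is exactly the pair the paper labors to locate, and the no-isolated-vertex conventions dispose of the degenerate cover shapes in one line. Your route bypasses Lemma~\ref{lem:complementing} entirely and makes transparent why the structure must be a double star; the paper's approach, by contrast, is self-contained (no appeal to K\"onig) but pays for it in length and case-splitting.
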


\begin{proof}
Assume $G$ is equimatchable with $|I|\geq3$, $|K|\geq 2$. Take a vertex $y\in I$ such that $d(y)\geq d(j)$ for all $j\in I$. Also, note that $|K|\geq 3$ since $|K|$ is odd from Lemma \ref{lem:cliquepartisodd}.\\

\noindent Suppose there exists a vertex $x\in K$ with $xy\notin E$. We claim $N_I(x)=I-\{y\}$, and assume the contrary. Let us take $t\in I-\{y\}$ with $xt\notin E$. Since $N_I(x)\neq\emptyset$, there exists $s\in I$ such that $xs\in E$. Note that $xy\notin E$ and $xs\in E$ imply $N(s)=K-N(y)$ or $N(s)=K$ from Lemma \ref{lem:complementing}. Since $d(y)\geq d(s)$ and $N(y)\neq K$, we get $N(s)=K-N(y)$. Similarly, $xt\notin E$ and $xs\in E$ imply $N(s)=K-N(t)$ or $N(s)=K$ from Lemma \ref{lem:complementing}. Thus, we get $N(s)=K-N(t)$ and so $N(y)=N(t)$. On the other hand, $xy\notin E$, $xs\in E$ and $d(y)\geq d(s)$ give there exists $a\in K$ such that $ay\in E$ and $as\notin E$. On the other hand, note that each vertex in $K-\{x,a\}$ is adjacent to exactly one of $y$ and $s$. Since $d(y)\geq d(s)$ and $|K|\geq 3$, we can find $b\in K-\{x,a\}$ such that $by\in E$. Thus, the edges $at$, $xs$, $by$ contradict with Lemma \ref{lem:threeindependent}. As a result, we have $N_I(x)=I-\{y\}$. Again, by using Lemma \ref{lem:complementing}, we get either $N(z)=K-N(y)$ or $N(z)=K$ for all $z\in I-\{y\}$. Since $d(y)\geq d(z)$ for all $z\in I$ and $N(y)\neq K$, we can conclude that if $N(y)\neq K$, then $N(z)=K-N(y)$ for all $z\in I-\{y\}$. Now, if there exist a vertex in $K-N(y)$ other than $x$, say $x_1$, let us take $z_1,z_2\in I-\{y\}$ by using $|I|\geq 3$. Again, the edges $xz_1$, $x_1z_2$, $ya$ contradict with Lemma \ref{lem:threeindependent}. Therefore, we have $N(y)=K-\{x\}$ and $N(z)=\{x\}$ for all $z\in I-\{x\}$ whenever $N(y)\neq K$.\\

\noindent Suppose $N(y)=K$, and take a vertex $z\in I-\{y\}$. Since $N(z)\neq\emptyset$, there exists $x\in K$ with $xz\in E$. Suppose $z$ has a neighbor other than $x$, say $x_1$. By using $|I|\geq 3$, choose a vertex $z_1\in I-\{y,z\}$. If $z_1$ has a different neighbor other than $x$ and $x_1$, say $x_2$, then the edges $yx$, $zx_1$, $z_1x_2$ contradict with Lemma \ref{lem:threeindependent}. Thus, $z_1$ is adjacent to at least one of $x$ and $x_1$. Also, by using $|K|\geq 3$ and $N(y)=K$, take $u\in K-\{x,x_1\}$ with $yu\in E$. If $z_1$ is adjacent to $x$ (resp. $x_1$), then the edges $z_1x$, $zx_1$, $yu$ (resp. $z_1x_1$, $zx$, $yu$) contradict with Lemma \ref{lem:threeindependent}. As a result, the only neighbor of $z$ becomes $x$. Now, take an element $k\in I-\{y,z\}$. Similarly, $k$ has a unique neighbor, say $v$. If $v$ is different from $x$, take an element from $w\in K-\{x,v\}$. Since $N(y)=K$, the edges $yw$, $zx$, $vk$ contradict with Lemma \ref{lem:threeindependent}. Therefore, $x$ becomes the only neighbor of each vertex in $I-\{y\}$, which completes the proof. \qed 
\end{proof}
            

\section{Characterization of Equimatchable Split Graphs}

In this section, we will give the characterization of equimatchable split graphs.

\begin{theorem}\label{thm:characterization}
Let $G=(V,E)$ be a simple undirected graph on $n\geq4$ vertices with no isolated vertices. Let $r$ and $p$ be the number of vertices of degree $1$ and $n-1$, respectively. Then, $G$ is an equimatchable split graph if and only if one of the followings holds:
\begin{itemize}
\item[(i)] $p=n$.
\item[(ii)] $r=n-1$ and $p=1$.
\item[(iii)] $p=1$, $r\geq 2$, $n-r$ is even, and all vertices have degree $1$, $n-r-1$ or $n-1$.
\item[(iv)] $p=0$, $r\geq 2$, $n-r$ is even, there are two vertices $x$ and $y$ with $xy\notin E$ such that $d(x)=n-2$, $d(y)=n-r-2$, and all vertices in $V-\{x,y\}$ have degree $1$ or $n-r-1$.
\item[(v)] There are two vertices $x$ and $y$ such that $n$ is odd, $d(x)+d(y)=p+n-2$ and all vertices in $V-\{x,y\}$ have degree $n-1$ or $n-2$. 
\end{itemize}
\end{theorem}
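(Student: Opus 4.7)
The plan is to prove both directions of the theorem by case analysis on $|K|$ and $|I|$ for a split partition $(K,I)$ obeying the standing convention that $N_I(k)\neq\emptyset$ for all $k\in K$ and $N(i)\neq\emptyset$ for all $i\in I$.

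For the forward direction, suppose $G$ is an equimatchable split graph. If $|I|=1$, the convention forces every $k\in K$ to be adjacent to the single element of $I$, so $G=K_n$ and (i) holds. If $|K|=1$ and $|I|\geq 2$, then $G$ is the star $K_{1,n-1}$ and (ii) holds. If $|I|=2$ and $|K|\geq 2$, Lemma~\ref{lem:cliquepartisodd} forces $|K|$ odd, so $n$ is odd; writing $I=\{x,y\}$, every $v\in K$ has degree $n-2$ or $n-1$ according as $|N_I(v)|$ is $1$ or $2$, and the identity $d(x)+d(y)=\sum_{v\in K}|N_I(v)|=|K|+p=(n-2)+p$ yields (v). If $|I|\geq 3$ and $|K|\geq 2$, Lemma~\ref{lem:generalcase} supplies $x\in K$ and $y\in I$ with $I-\{y\}$ consisting of vertices adjacent only to $x$ and either $N(y)=K$ or $N(y)=K-\{x\}$. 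Translating the first subcase to degree data yields (iii) and the second yields (iv), where the parity ``$n-r$ is even'' simply restates $|K|$ odd from Lemma~\ref{lem:cliquepartisodd}.

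For the reverse direction, I handle each case in turn. Cases (i) and (ii) are immediate. For (iii), I take $K=\{x\}\cup\{v:d(v)=n-r-1\}$; the degree condition on each middle vertex $v$ (namely $n-r-2$ non-universal neighbors inside the set of $n-r-2$ other middle vertices) forces the middles to be pairwise adjacent, so $K$ is a clique. Moving one middle vertex $y$ into $I$ gives a split partition with $|K-\{y\}|=n-r-1$ odd. A short enumeration of maximal matchings (showing $x$ must be matched, then the remaining clique pairs up perfectly because of the parity of $n-r$) yields the constant size $(n-r)/2$. For (iv) the argument is parallel: the middle-vertex degree forces every middle to be adjacent to $y$, so $\{x\}\cup\{\text{middles}\}$ is a clique and $\{\text{leaves}\}\cup\{y\}$ is independent; a parity-based enumeration, casing on how $x$ and $y$ are matched and using $xy\notin E$, again gives $(n-r)/2$ edges throughout. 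For (v), I count non-edges inside $V-\{x,y\}$ using the degree-sum identity and the hypothesis $d(x)+d(y)=p+n-2$; writing $|A|$ and $|B|$ for the vertices in $V-\{x,y\}$ of degree $n-1$ and $n-2$, $\pi\in\{0,1,2\}$ for the number of universals in $\{x,y\}$, and $e_{xy}\in\{0,1\}$, the count reduces to $(\pi-2\,e_{xy})/2$, which is a non-negative integer only when $(\pi,e_{xy})=(0,0)$ or $(2,1)$, both yielding a clique on $V-\{x,y\}$. In the first case $I=\{x,y\}$ and $K=V-\{x,y\}$ of odd size $n-2$ is a valid split partition and Lemma~\ref{lem:independent-two} gives equimatchability; in the second, both $x,y$ are universal so $G=K_n$.

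The hardest step will be the rigidity computation in (v): correctly setting up the non-edge count and handling the interplay between whether $xy\in E$ and how many of $\{x,y\}$ are universal. Once the identity that the number of non-edges inside $V-\{x,y\}$ equals $(\pi-2e_{xy})/2$ is derived, the rest is an integrality-plus-nonnegativity argument that pins $G$ down to one of the two consistent configurations. The matching enumerations for (iii) and (iv) are routine parity arguments, and the structural lemmas already proved (Lemmas~\ref{lem:cliquepartisodd}, \ref{lem:independent-two}, \ref{lem:generalcase}) do the heavy lifting on the forward side.
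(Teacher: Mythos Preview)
Your proposal is correct and follows essentially the same approach as the paper's proof: the forward direction uses Lemmas~\ref{lem:cliquepartisodd} and~\ref{lem:generalcase} in exactly the same way, and the reverse direction verifies each of (i)--(v) by reconstructing the split partition from the degree data and checking that every maximal matching has size $(n-r)/2$ (or that $G=K_n$). The only cosmetic difference is in case (v), where you count non-edges inside $V-\{x,y\}$ to obtain $(\pi-2e_{xy})/2$, while the paper partitions $V$ into $A,A_x,A_y$ and the leftover; these are the same identity in different bookkeeping, and your integrality/nonnegativity argument should be supplemented by the trivial observation that $\pi\geq 1$ forces $e_{xy}=1$ (to exclude $(\pi,e_{xy})=(2,0)$, which also yields a nonnegative integer).
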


\begin{proof}
Firstly, let $G$ be an equimatchable split graph with split partition $(K,I)$ where $K$ is a clique and $I$ is an independent set. As similar to the previous section, we can assume $N_I(k)\neq\emptyset$ for all $k\in K$ and $N(i)\neq\emptyset$ for all $i\in I$. Observe that if $|I|=1$, then $G$ becomes a complete graph and so (i) is satisfied. Similarly, if $|K|=1$, then $G$ becomes a star and so (ii) is satisfied. Assume $|I|,|K|\geq 2$, we get $|K|$ is odd from Lemma \ref{lem:cliquepartisodd}.\\

\noindent Now, if $|I|=2$, let us take $x,y\in I$. By using $|K|$ is odd, we get $n$ is odd. Since each element in $K$ is adjacent to at least one of $x$ and $y$, each vertex in $K$ has degree $n-2$ or $n-1$. Moreover, $d(x)+d(y)-|K|$ is equal to the number of vertices in $K$ which are adjacent to both of $x$ and $y$, in other words number of vertices of degree $n-1$. Thus, we get $d(x)+d(y)=p+|K|=p+n-2$ and so (v) is satisfied.\\

\noindent Hence, let us assume $|I|\geq 3$ and $|K|\geq 2$. From Lemma \ref{lem:generalcase}, there exist $x\in K$ and $y\in I$ such that $N(z)=\{x\}$ for all $z\in I-\{y\}$, and either $N(y)=K-\{x\}$ or $N(y)=K$. Since $|K|$ is odd from Lemma \ref{lem:cliquepartisodd}, we have $|K|\geq 3$. Then, all vertices in $I-\{y\}$ have degree $1$ whereas all vertices in $K\cup \{y\}$ have degree at least two. Thus, $|K|=n-|I|=n-r-1$ implies $n-r$ is even. On the other hand, if $N(y)=K$, then we get $d(x)=n-1$ and $K\cup \{y\}$ becomes a clique of size $n-r$. As a result, all vertices in $K\cup \{y\}$ except $x$ have degree $n-r-1$, then (iii) is satisfied. Similarly, if $N(y)=K-\{x\}$, clearly (iv) is satisfied. Therefore, if $G$ is an equimatchable split graph, then at least one of (i)-(v) is satisfied.\\

\noindent Conversely, let $G=(V,E)$ be a simple undirected graph on $n\geq 4$ vertices. If (i) holds, then $G$ is a clique and if (ii) holds, then $G$ is a star. In both cases, $G$ becomes a split graph where clique or independent part has only one vertex, thus the result follows from Lemma \ref{lem:trivialcases}.\\

\noindent Assume (iii) holds. Let $u$ be the unique vertex of degree $n-1$ and $I$ be the set of vertices of degree $1$. Note that there is no edge between $I$ and $V-u$. On the other hand, each vertex in $V-(I\cup\{ u\})$ has degree $n-r-1$ with $|V-(I\cup \{u\})|=n-r-1$. Thus, $(V-I,I)$ becomes a split partition for $G$. Observe that for any maximal matching $M$, it can have at most one vertex from $I$ since each vertex in $I$ has degree $1$ and all of them are adjacent to $u$. Moreover, if there exists a vertex $w\in I\cap V(M)$, then we get $uw\in E(M)$. Since $V-(I\cup\{u\})$ is a clique and $|V-(I\cup\{u\})|=n-r-1$ is odd, we get $M$ has $\dfrac{n-r-2}{2}+1=\dfrac{n-r}{2}$ edges. Similarly, if $M$ has no vertices from $I$, then $M$ has $\dfrac{n-r}{2}$ edges since $V-I$ is a clique. As a result, all maximal matchings in $G$ has $\dfrac{n-r}{2}$ edges and so $G$ is equimatchable.\\

\noindent Assume (iv) holds, and take the vertices $x$ and $y$ with $d(x)=n-2$, $d(y)=n-r-2$, $xy\notin E$. Similarly, let $I$ be the set of vertices of degree $1$. Note that $x$ is the unique neighbor of the vertices in $I$. Thus, $I\cup \{y\}$ becomes an independent set. Also, $d(y)=n-r-2$ and $|(V-I)-\{x,y\}|=n-r-2$ implies $y$ is adjacent to all vertices in $(V-I)-\{x,y\}$. Moreover, each vertex in $(V-I)-\{x,y\}$ has degree $n-r-1$, and each of them is not adjacent to any vertex in $I$. Thus, $V-(I\cup\{y\})$ becomes a clique, so we get $G$ is a split graph. Take a maximal matching $M$, we claim $M$ has exactly $\dfrac{n-r}{2}$ edges. Firstly, it can have at most one vertex from $I$ since each vertex in $I$ has degree $1$ and all of them are adjacent to $x$. Thus, $M$ can have at most $\dfrac{n-r+1}{2}$ edges, and by using the fact that $n-r$ is even, we get there are at most $\dfrac{n-r}{2}$ edges in $M$. Also, $M$ can miss at most one vertex from $V-(I\cup\{y\})$ since it is a clique. Suppose $t\in V-(I\cup\{y\})$ but $t\notin M$. Note that $t\neq x$ since $x$ is the unique neighbor of the vertices in $I$. Thus, $t\notin M$ implies $y\in M$. As a result, we get $|V(M)|\geq |V-(I\cup\{y\})|=n-r-1$. Since $n-r$ is even, we get $M$ has exactly $\dfrac{n-r}{2}$ edges and so $G$ is equimatchable.\\

\noindent Assume (v) holds, and take the vertices $x$ and $y$ with $d(x)+d(y)=p+n-2$. Let $A$ be the set of vertices of degree $n-1$, and $B=(V-\{x,y\})-A$. Note that each vertex in $B$ has degree $n-2$. Let us define $A_x$ (resp. $A_y$) as the set of vertices in $B$ which miss $x$ (resp. $y$). It is clear that $A_x$, $A_y$ and $A$ are disjoint, and observe that
$$d(x)=n-1-|A_x|-\mathbbm{1}_{\{xy\notin E\}}\text{ and }d(y)=n-1-|A_y|-\mathbbm{1}_{\{xy\notin E\}}$$

\noindent Thus, $p+n-2=2n-2-|A_x|-|A_y|-2\cdot \mathbbm{1}_{\{xy\notin E\}}$ implies $|A|+|A_x|+|A_y|+2\cdot \mathbbm{1}_{\{xy\notin E\}}=n$. Since $A$, $A_x$, $A_y$ are disjoint, we have $2\cdot \mathbbm{1}_{\{xy\notin E\}}=|V-(A\cup A_x\cup A_y)|$.\\

\noindent Suppose $xy\notin E$. Then, we have $x,y\notin A$ and so $x,y\in V-(A\cup A_x\cup A_y)$. Thus, this equality can be true only if $V-(A\cup A_x\cup A_y)=\{x,y\}$. In other words, $xy\notin E$ implies $B=A_x\cup A_y$. As a result, $A\cup B=V-\{x,y\}$ forms a clique and so $G$ becomes a split graph with split partition $(A\cup B,\{x,y\})$. Then, $G$ is equimatchable from Lemma \ref{lem:independent-two} because each vertex in $A\cup B$ has a neighbor in $\{x,y\}$ and $|A\cup B|=n-2$ is odd.\\

\noindent Suppose $xy\in E$, in other words, assume $V-(A\cup A_x\cup A_y)=\emptyset$. Thus, $x,y\notin V-(A\cup A_x\cup A_y)$ gives $x,y\in A$, which implies $d(x)=d(y)=n-1$. By using $p+n-2=d(x)+d(y)$, we can conclude that $p=n$ and so (i) also holds. As a result, if $G$ satisfies at least one of (i)-(v), then it an equimatchable split graph. \qed   

\end{proof}


\section{Recognition Algorithm}

Let $G=(V,E)$ be an undirected simple graph of order $n\geq 4$ with no isolated vertices, and take a non-decreasing degree ordering $(v_1,v_2,...,v_n)$ of $G$. Define $r$ and $p$ as in the Theorem \ref{thm:characterization}. We can observe the followings:
\begin{enumerate}
\item If $d(v_2)=n-1$, then each vertex $v_k$ is adjacent to all remaining vertices for $k\geq 2$, which simply implies $d(v_1)=n-1$ and so $p=n$.
\item Suppose $G$ is an equimatchable split graph. If $n-2\geq d(v_2)\geq 2$, then we get $p\leq n-2$ and $r\leq 1$. Thus, $G$ must satisfy the condition (v) of Theorem \ref{thm:characterization}. Take $x,y\in V$ with $d(x)+d(y)=p+n-2$ and $d(x)\leq d(y)$ where each vertex in $V-\{x,y\}$ has degree at least $n-2$. Thus, we get $d(v_3)\geq n-2\geq d(x)$, which implies $x$ has exactly $p$ neighbors of degree $n-1$ and so $d(x)\geq p$. If $d(x)=p$, we get $d(y)=n-2$, which gives $d(v_2)=n-2$ and so $d(v_1)+d(v_2)=p+n-2$. If $d(x)\geq p+1$, we have $d(y)\leq n-3$, which implies $\{x,y\}=\{v_1,v_2\}$ since $d(v_3)\geq n-2$, again we get $d(v_1)+d(v_2)=p+n-2$.
\item Suppose $G$ is an equimatchable split graph with $d(v_2)=1$. Clearly, conditions (i) and (v) in Theorem \ref{thm:characterization} cannot be satisfied. Thus, at least one of (ii)-(iii)-(iv) must be satisfied and so we have $p\leq 1$. Also, the inequality $r\geq 2$ of the conditions (iii)-(iv) can be omitted. 
\end{enumerate}
 
\noindent With these observations, it is clear that Algorithm \ref{algo:equisplit} works. On the other hand, it is well-known that a non-decreasing degree sequence of a graph $G$ can be obtained in $O(m+n)$ time where $m$ and $n$ represents the number of edges and vertices of $G$, respectively. Moreover, steps (2)-(3)-(4) will take $O(n)$ time since they only need to check the degrees of the vertices. For the rest, each step requires a constant time. As a result, Algorithm \ref{algo:equisplit} has a linear running time and answers whether a given graph $G$ is an equimatchable split graph or not.

\vspace{0.4 cm}

\noindent \textbf{Acknowledgements:} I would like to thank T{\i}naz Ekim (Assoc. Prof. at Bo\u{g}azi\c{c}i University) for introducing me equimatchable graphs and giving me this problem. Also, she stated that the problem is originated from some private communications with Nina Chiarelli (Asst. Prof. at University of Primorska), Martin Milanic (Prof. at University of Primorska) and Didem G\"{o}z\"{u}pek (Assoc. Prof. at Gebze Technical University).


\begin{algorithm}[H]
	
	\KwInput{An undirected simple graph $G=(V,E)$ of order $n\geq 4$ with no isolated vertices.}
	\KwOutput{A reply \textbf{YES} if $G$ is an equimatchable split graph, and a reply \textbf{NO} otherwise.}
	Compute a non-decreasing degree ordering $(v_1,v_2,...,v_n)$ of $G$.\\
	Set $p$ as the number of vertices of degree $n-1$.\\
	Set $r$ as the number of vertices of degree $1$.\\
	Set $q$ as the number of vertices of degree $n-r-1$.\\
	\If{$d(v_2)=n-1$}{
	output \textbf{YES};\\
	\textbf{return};}

	\If{$d(v_2)\geq 2$}{
		\If{$n$ is odd and $d(v_3)\geq n-2$ and $d(v_1)+d(v_2)= p+n-2$}
			{output \textbf{YES};\\
			\textbf{return};}
		
		
		
	output \textbf{NO};\\
	\textbf{return};}

		
		
		

		\If{$p=1$}{
		\If{$r=n-1$}
		{output \textbf{YES};\\
			\textbf{return};}
		\If{$n-r$ is even and $q= n-r-1$}{output \textbf{YES};\\
				\textbf{return};}

		}
			\If{$n-r$ is even and $d(v_{r+1})=q= n-r-2$ and $d(v_n)= n-2$ and $v_{r+1}v_{n}\notin E$}{output \textbf{YES};\\
				\textbf{return};}
	output \textbf{NO};\\
		
	\caption{EquiSplit} \label{algo:equisplit}
\end{algorithm}



\end{document}